\newtheorem{thm}{Theorem}[section]
\newtheorem{prop}[thm]{Proposition}
\theoremstyle{definition}
\newtheorem{defn}[thm]{Definition}
\theoremstyle{remark}
\newtheorem{rem}[thm]{Remark}
\numberwithin{equation}{section}
\newcommand{\CC}{\mathbb C}
\newcommand{\RR}{\mathbb R}
\newcommand{\NN}{\mathbb N}
\renewcommand{\Im}{\operatorname{Im}}
\renewcommand{\Re}{\operatorname{Re}}
\newcommand{\e}{\operatorname{e}}
\begin{document}

\title{On the generalization of Forelli's theorem}%

\author{Jae-Cheon Joo, Kang-Tae Kim and Gerd Schmalz}%
\address{(Joo and Kim) Department of Mathematics, POSTECH, 
Pohang 790-784 The Republic of Korea}%
\email{(Joo) joo@postech.ac.kr}
\email{(Kim) kimkt@postech.ac.kr}
\address{(Schmalz) University of New England, Armidale, 
Australia}
\email{gerd@mcs.une.edu.au}%
\thanks{Research of the first and second named authors is 
supported in part by grant 2011-007831 of the National Research 
Foundation of Korea. The second named author is also supported 
in part by the grant 2011-0030044 (The SRC-GAIA) of the NRF of 
Korea. The third named author is supported by ARC 
Discovery grant DP130103485}%
\subjclass[2010]{32A10, 32M25, 32S65, 32A05}%
\keywords{Forelli's theorem, Complex-analyticity, Vector field}%

\begin{abstract}
The purpose of this paper is to present a solution to perhaps
the final remaining case in the line of study concerning 
the generalization of Forelli's theorem on the complex analyticity 
of the functions that are: (\romannumeral 1) $\mathcal{C}^\infty$ smooth
at a point, and (\romannumeral 2) holomorphic along the complex integral 
curves generated by a contracting holomorphic vector field with an isolated
zero at the same point.
\end{abstract}
\maketitle


\section{Introduction}

Among the theorems concerning the complex-analyticity of 
functions of several complex variables, the most exploited 
should be the Hartogs analyticity theorem.  The second may be 
the following theorem of Forelli:

\begin{thm}[Forelli (\cite{For} 1977), Stoll (\cite{Stoll} 1980)]
\label{original}
Let $F \colon B^n \to \CC$ be a complex-valued function defined on 
the unit ball $B^n$ in $\CC^n$. If $F$ satisfies the following two 
conditions:
\begin{itemize}
\item[(\romannumeral 1)] $F \in \mathcal C^\infty(0)$, i.e., for 
every positive integer $k$ there exists an open neighborhood 
$U_k$ of the origin $0$ such that $F \in \mathcal C^k (U_k)$;
\item[(\romannumeral 2)] For every $v \in \CC^n$ with $\|v\|=1$, 
the function $\varphi_v (\zeta) := F(\zeta v)$ defined on the unit 
disc $B^1$ in $\CC$ is holomorphic in the complex variable $\zeta$,
\end{itemize}
then $F$ is holomorphic.
\end{thm}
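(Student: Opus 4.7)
The plan is to produce a formal Taylor series of $F$ at $0$, show that it is purely holomorphic, establish pointwise equality with $F$ on all of $B^n$, and then upgrade pointwise to locally uniform convergence via pluripotential theory.

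First, using condition (\romannumeral 1) I would write the formal Taylor expansion of $F$ at the origin as $\hat F(z) = \sum_{m,n\ge 0} P_{m,n}(z,\bar z)$, with each $P_{m,n}$ a polynomial that is bihomogeneous of bidegree $(m,n)$ in $(z,\bar z)$. Condition (\romannumeral 2) says $\varphi_v(\zeta) = F(\zeta v)$ is holomorphic in $\zeta$ for every unit vector $v$, so its smooth Taylor expansion $\sum_{m,n} P_{m,n}(v,\bar v)\zeta^m\bar\zeta^n$ can contain no $\bar\zeta$ terms. Hence $P_{m,n}(v,\bar v) = 0$ for every $v\in\CC^n$ whenever $n\ge 1$, which forces $P_{m,n}\equiv 0$ as a polynomial in $(z,\bar z)$ by polarization. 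The formal series thus reduces to $\hat F = \sum_m p_m$, with each $p_m$ a holomorphic homogeneous polynomial of degree $m$.

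Next I would establish the pointwise equality $F(w) = \sum_m p_m(w)$ on all of $B^n$. Since $F$ is continuous at $0$, there is $r_0>0$ with $|F|\le M$ on $\|w\|\le r_0$. Cauchy estimates along complex lines through the origin, together with homogeneity, give $|p_m(w)|\le M(\|w\|/r_0)^m$, so $\sum_m p_m$ converges uniformly on $\|w\|<r_0$ to a holomorphic function which equals $F$ there by matching Taylor series. For arbitrary $w=\zeta v\in B^n$ with $\|v\|=1$, both $F(\zeta v)$ and $\sum_m p_m(v)\zeta^m$ are holomorphic in $\zeta$ on $|\zeta|<1$ and agree near $\zeta=0$, so they coincide on $|\zeta|<1$ by the identity theorem. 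In particular $\limsup_m |p_m(v)|^{1/m}\le 1$ for every $v$ on the unit sphere $S = \partial B^n$.

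The main obstacle is upgrading this to locally uniform convergence on $B^n$, since pointwise control of $\limsup_m |p_m(v)|^{1/m}$ does not a priori give a uniform upper bound on the plurisubharmonic functions $u_m := \frac{1}{m}\log|p_m|$. I would first apply Baire category to the decomposition $S=\bigcup_k E_k$ with $E_k = \{v\in S: |p_m(v)|\le k^m \text{ for all } m\}$; each $E_k$ is closed by continuity of the polynomials, so some $E_{k_0}$ has nonempty relative interior $V\subset S$. Since $V$ is non-pluripolar in $\CC^n$, the Bernstein-Walsh-Siciak inequality yields $|p_m(z)|\le k_0^m\exp(m\Phi_V(z))$, where the Siciak extremal function $\Phi_V$ is locally bounded on $\CC^n$, so the family $\{u_m\}$ is locally uniformly bounded above. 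Combined with the pointwise estimate $\limsup_m u_m(w)\le \log\|w\|$, Hartogs' lemma yields, for each compact $K\subset B^n$ and each small $\varepsilon>0$, the bound $|p_m(w)|\le (r_K e^\varepsilon)^m$ on $K$ with $r_K := \sup\{\|w\|:w\in K\}<1$, for all $m$ sufficiently large. This forces geometric uniform convergence of $\sum_m p_m$ on $K$, so $F$ is holomorphic on $B^n$.
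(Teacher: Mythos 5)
Your proof is correct, and it is essentially the classical pluripotential-theoretic argument (going back to Forelli and Stoll): homogeneous expansion, killing the antiholomorphic bidegrees by restriction to lines and polarization, pointwise summation via the one-variable identity theorem, and then the Baire category / Bernstein--Walsh--Siciak / Hartogs-lemma upgrade to locally uniform convergence. The only steps you leave implicit are standard: that a nonempty relatively open piece of the sphere is non-pluripolar (e.g.\ because it contains a non-polar arc of a circle in every complex line through nearby interior points, forcing any psh function that is $-\infty$ on it to be $-\infty$ on an open set), and the version of Hartogs' lemma with a continuous majorant. The paper itself does not prove Theorem \ref{original}; it quotes it and instead proves a generalization (Theorem \ref{main}) whose specialization to the Euler field $E=\sum z_j\partial/\partial z_j$ recovers Forelli's theorem by a genuinely different route: a formal power series argument (Proposition \ref{p;formal}) shows that each $\partial F/\partial\bar z_j$ has trivial Taylor series at $0$; then, writing $G(\eta,\zeta)=F(z(\eta;\zeta))$ along the exponential flow, the functions $\partial G/\partial\bar\eta_j$ become bounded holomorphic functions of $\zeta$ on a half-plane (more generally on $D(\underline P,\underline\lambda)$) decaying faster than $e^{-\ell\lambda\Re\zeta}$ for every $\ell$, and a Phragm\'en--Lindel\"of-type unique continuation principle (Proposition \ref{p;uniqueness}) forces them to vanish, giving $\overline\partial F=0$ pointwise. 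Your approach is self-contained modulo standard pluripotential theory but is tied to the linear radial structure (it needs the homogeneous expansion); the paper's approach avoids pluripotential theory entirely and extends to non-diagonalizable aligned vector fields, where no homogeneous expansion is available.
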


It was a surprise that the condition (\romannumeral 1) turned out 
impossible to be relaxed to a finite differentiability; consider, 
for instance, the function 
$$
F(z_1,z_2) = 
\begin{cases}
\displaystyle{\frac{\bar z_2}{\bar z_1} \cdot  z_1^{k+2}} 
   & \text{if } z_1 \not= 0 \\
0  & \text{if } z_1=0.
\end{cases}
$$
This function is $\mathcal C^k$ everywhere, satisfies the 
condition (\romannumeral 2) of the hypothesis, but is nowhere 
holomorphic.

On the other hand, the generalizations have occurred recently 
in the following two natural directions. The first direction 
concerns the case that the domain is the union of 
holomorphic discs passing through a single point of the domain. 
In this direction, E. M. Chirka presented the 
complex two dimensional case in \cite{Chirka} and asked whether all 
dimensional generalization is possible.  Responding to the question,
the authors, in the earlier paper \cite{JKS}, presented the following 
result:

\begin{thm}[Chirka 2006, Joo-Kim-Schmalz 2013]
If $\Omega$ is a domain in $\CC^n$ with a ${\mathcal C}^1$ radial
foliation by (non necessarily linear)  holomorphic discs
at a point $p \in \Omega$, then any function 
$F\colon\Omega \to \CC$ satisfying
\begin{itemize}
\item[(\romannumeral1)] 
$F \in {\mathcal C}^\infty(p)$
\item[(\romannumeral2)]
$F$ is holomorphic along the leaves,
\end{itemize}
is holomorphic on $\Omega$.
\end{thm}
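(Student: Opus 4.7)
The plan is to translate so that $p=0$ and parameterize the foliation by a $\mathcal{C}^1$ map $\Phi(v,\zeta)=\phi_v(\zeta)$, $v\in S^{2n-1}$, $\zeta\in B^1$, each $\phi_v$ a holomorphic disc with $\phi_v(0)=0$ and, after normalization, $\phi_v'(0)=v$. Expand $\phi_v(\zeta)=\zeta v+\zeta^2 b_2(v)+\zeta^3 b_3(v)+\cdots$; by Cauchy integral representation the higher coefficients $b_j(v)$ inherit $\mathcal{C}^1$ regularity in $v$. I would then work in three stages: (i) show the formal Taylor series of $F$ at $0$ has no antiholomorphic terms; (ii) upgrade this formal holomorphy to genuine holomorphy on a full neighborhood of $0$; (iii) propagate holomorphy from this neighborhood to all of $\Omega$.

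For stage (i), let $c_{\alpha\beta}$ denote the formal Taylor coefficients of $F$ at $0$. Induct on $N=|\alpha|+|\beta|$ to show $c_{\alpha\beta}=0$ whenever $|\beta|\ge 1$. Consider the formal substitution of $\phi_v(\zeta)$ into $\sum c_{\alpha\beta}z^\alpha\bar z^\beta$ and collect the coefficient of $\zeta^m\bar\zeta^k$ with $m+k=N$, $k\ge 1$: the inductive hypothesis annihilates every $c_{\alpha\beta}$ with $|\beta|\ge 1$ and $|\alpha|+|\beta|<N$; purely holomorphic $c_{\alpha,0}$ produce no $\bar\zeta^k$ term since $\phi_v$ is holomorphic in $\zeta$; and indices with $|\alpha|>m$ or $|\beta|>k$ cannot contribute to $\zeta^m\bar\zeta^k$. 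What remains is the leading expression $\sum_{|\alpha|=m,|\beta|=k}c_{\alpha\beta}v^\alpha\bar v^\beta$, which must vanish for all $v$ by the holomorphy of $F\circ\phi_v$ in $\zeta$; polarization then yields $c_{\alpha\beta}=0$ at level $N$.

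Stage (ii) is where I expect the chief technical difficulty. The $\mathcal{C}^1$ radial hypothesis provides $\rho>0$ with $\phi_v(\bar B^1_\rho)\subset K\subset\Omega$ uniformly in $v$ and $|F|\le M$ on the compact $K$; Cauchy's estimate on each leaf then gives $|A_m(v)|\le M\rho^{-m}$ uniformly in $v$, where $A_m(v)$ is the $m$-th $\zeta$-Taylor coefficient of $F\circ\phi_v$. By the chain rule, $A_m(v)$ is a polynomial in the holomorphic coefficients $c_\alpha:=c_{\alpha,0}$ (for $|\alpha|\le m$) and in the $b_j(v)$, with leading piece the homogeneous holomorphic polynomial $\sum_{|\alpha|=m}c_\alpha v^\alpha$. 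Inverting this relation inductively in $m$, and combining the uniform bound on $A_m$ with Cauchy-type estimates for homogeneous holomorphic polynomials on $S^{2n-1}$, I would extract geometric bounds on the $c_\alpha$, giving absolute convergence of $\hat F(z):=\sum_\alpha c_\alpha z^\alpha$ in a polydisc about $0$. Since $\hat F$ and $F$ share the same $\zeta$-Taylor series along every leaf and both are holomorphic in $\zeta$ there, they coincide on each leaf, and hence on a full neighborhood of $0$.

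The main obstacle lies in executing this inversion cleanly: unlike Forelli's original setting, where $A_m(v)$ is itself a homogeneous holomorphic polynomial of degree $m$ in $v$, here $A_m(v)$ receives non-holomorphic corrections from the $\mathcal{C}^1$ coefficients $b_j(v)$. Isolating the top-order piece inductively must be paired with error estimates that keep the bounds on $c_\alpha$ geometric in $m$ rather than factorial. Finally, for stage (iii), once $F$ is holomorphic on a neighborhood $U$ of $p$, the identity principle on each leaf forces $F|_{\phi_v}$ to agree with the analytic continuation of $\hat F|_{U\cap\phi_v}$; since every point of $\Omega$ lies on some leaf and the foliation varies $\mathcal{C}^1$, this upgrades to joint holomorphy of $F$ throughout $\Omega$.
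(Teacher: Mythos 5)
The theorem you are proving is not actually proved in this paper: it is quoted from the authors' earlier work \cite{JKS} (and from Chirka's paper in dimension two), so there is no in-text argument to compare against line by line. Your three-stage architecture --- formal holomorphy at $p$, upgrade to genuine holomorphy near $p$, propagation to all of $\Omega$ --- is the right one, and your stage (i) is sound: since each factor of $\phi_v(\zeta)=\zeta v+\zeta^2b_2(v)+\cdots$ contributes at least one power of $\zeta$ (resp.\ $\bar\zeta$), a monomial $z^\alpha\bar z^\beta$ only feeds into bidegrees $(a,b)$ with $a\ge|\alpha|$, $b\ge|\beta|$, so the coefficient of $\zeta^m\bar\zeta^k$, $k\ge1$, collapses to $\sum_{|\alpha|=m,|\beta|=k}c_{\alpha\beta}v^\alpha\bar v^\beta$ modulo terms killed by the induction, and linear independence of the $v^\alpha\bar v^\beta$ on the sphere finishes it.

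The proposal is nonetheless incomplete precisely where you say it is, and the route you sketch for stage (ii) is the wrong tool. Inverting $A_m(v)=\sum_{|\alpha|=m}c_\alpha v^\alpha+R_m(v)$ coefficient by coefficient is genuinely hazardous: $R_m$ involves every $c_\beta$ with $|\beta|<m$ multiplied by products of the $b_j(v)$, and a crude induction loses a factor at each step, producing super-geometric bounds on the $c_\alpha$; you acknowledge this but offer no mechanism to prevent it. The standard way out is to avoid inversion altogether. Once stage (i) shows that $S_N:=\sum_{m\le N}P_m$ (with $P_m$ the holomorphic homogeneous parts) is the degree-$N$ Taylor polynomial of $F$, Taylor's theorem gives $(F-S_N)\circ\phi_v(\zeta)=O(|\zeta|^{N+1})$, and this function is holomorphic in $\zeta$ on a fixed disc $|\zeta|<\rho$; the Schwarz lemma then yields $|(F-S_N)(\phi_v(\zeta))|\le(|\zeta|/\rho)^{N+1}\bigl(M+\sup_{|z|\le R}|S_N|\bigr)$. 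Since the images $\phi_v(\{|\zeta|\le\rho/2\})$, $v\in S^{2n-1}$, cover a fixed neighborhood $W$ of the origin, a short recursion coupling this estimate with the elementary growth bound for polynomials of degree $N$ between the radii $R/2$ and $R$ gives geometric decay of $\sup_W|F-S_N|$, hence uniform convergence of $S_N$ to $F$ on $W$. Some argument of this type is indispensable; without it stage (ii) is a statement of intent rather than a proof. Stage (iii) is likewise under-argued: the leafwise identity principle only controls $F$ \emph{restricted} to each disc, and to get joint holomorphy at points outside $W$ you need an Osgood/Hartogs-type propagation lemma (holomorphy on an open piece of each leaf plus holomorphy, or at least $\mathcal C^1$ regularity, transversally), which is exactly the ingredient the present paper imports from Lemma 6.1 of \cite{KPS} in the vector-field setting.
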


This seems to have settled the first direction. Therefore, it 
is natural to shift the focus onto the other direction of 
generalization.  It starts with the re-interpretation of 
the condition (\romannumeral 2) of the original 
Forelli's theorem that the analyticity of the given function 
$F$ along the radial complex lines is equivalent to 
the condition that \it $F$ is holomorphic along 
the complex integral curves of the complex Euler vector field 
$
E=\sum_{j=1}^n z_j \frac{\partial}{\partial z_j} 
$. 
\rm
As soon as this new viewpoint is taken, the following natural
question arises: 
\medskip

\begin{quote}
\bf Question. \it Let $X$ be a holomorphic vector field vanishing 
only at the origin.  Replace the condition {\rm (\romannumeral 2)}  
in the statement of Theorem \ref{original} by the condition:
\begin{itemize}
\item[] \rm
``$f$ is holomorphic along the complex integral \newline 
curves of $X$.''
\end{itemize}
Then, for which $X$ would the conclusion continue to hold?  \rm
\end{quote}
\medskip

\noindent
The answer to this question, in the generic subcase where $X$ is diagonalizable, was given in \cite{KPS}. 

\begin{thm}[Kim-Poletsky-Schmalz \cite{KPS}] \label{thm_KPS}
Let $F\colon B^n\to \mathbb C$ be a function 
defined on the unit open ball in $\CC^n$, and let 
$X = \sum_{k=1}^n \alpha_k z_k \frac{\partial}{\partial z_k}$,
where $\alpha_1, \ldots, \alpha_n$ are complex numbers satisfying
$\alpha_j/ \alpha_\ell>0$ for any $j,\ell \in \{1,\ldots, n\}$.
If $F$ satisfies the following two conditions:
\begin{enumerate}
\item $F\in\mathcal C^\infty(0)$
\item $F$ is holomorphic along the complex integral curves of $X$,
\end{enumerate}
then $F$ is holomorphic on $B^n$.
\end{thm}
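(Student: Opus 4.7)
The plan is to normalize $X$ and argue in three steps. Since $\alpha_j/\alpha_\ell \in \RR_{>0}$ for all $j,\ell$, there is $\alpha \in \CC^*$ such that $\alpha_j/\alpha > 0$ for every $j$; multiplying $X$ by $\alpha^{-1}$ (which preserves the integral curves) reduces to the case that $\alpha_1,\ldots,\alpha_n$ are positive real. Then
\[
\phi_t(\zz) \;=\; (\e^{\alpha_1 t}\,z_1,\ldots,\e^{\alpha_n t}\,z_n),\quad t\in\CC,
\]
contracts every $\zz$ to $0$ as $\Re(t) \to -\infty$. The three steps will be: (A) kill the mixed Taylor coefficients at the origin, i.e.\ $\partial^I\bar\partial^J F(0)=0$ for $J\ne 0$; (B) upgrade this formal vanishing to genuine holomorphy of $F$ on a neighborhood $U$ of $0$; (C) propagate the holomorphy from $U$ to all of $B^n$ along the flow.

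For (A), I would fix a small $\zz$ and set $f(t):=F(\phi_t(\zz))$, holomorphic in $t$ on a left half-plane $H_\zz := \{\Re t < T(\zz)\}$. For each $N$, the $\mathcal C^{N+1}$-smoothness of $F$ at $0$ gives
\[
f(t) \;=\; \sum_{|I|+|J|\le N} c_{IJ}\,\zz^I\,\bar\zz^J\,\e^{(\alpha\cdot I)\,t + (\alpha\cdot J)\,\bar t} \;+\; R_N(t),\qquad |R_N(t)| \le C_N\,\e^{(N+1)\alpha_{\min}\Re(t)},
\]
with $c_{IJ} = (I!\,J!)^{-1}\partial^I\bar\partial^J F(0)$ and $\alpha_{\min}:=\min_j\alpha_j$. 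Writing $t = s + \I\theta$, each exponential splits into a modulus $\e^{[(\alpha\cdot I)+(\alpha\cdot J)]s}$ and a phase $\e^{\I[(\alpha\cdot I)-(\alpha\cdot J)]\theta}$. Two extractions should isolate each coefficient: at fixed very negative $s$, a Fourier (or Bohr almost-periodic) average in $\theta$ picks out a single value of $\alpha\cdot(I-J)$, and then rescaling by $\e^{-\rho s}$ with $\rho = \alpha\cdot(I+J)$ and sending $s\to-\infty$ peels off the slowest-decaying block. Since $\partial_{\bar t} f \equiv 0$ and $\partial_{\bar t}\e^{(\alpha\cdot I)\,t + (\alpha\cdot J)\,\bar t} = (\alpha\cdot J)\,\e^{(\alpha\cdot I)\,t + (\alpha\cdot J)\,\bar t}$ with $\alpha\cdot J > 0$ whenever $J\ne 0$, the procedure forces $c_{IJ}\,\zz^I\,\bar\zz^J = 0$ on a neighborhood of the origin; bihomogeneity in $(\zz,\bar\zz)$ then yields $c_{IJ}=0$ term by term.

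After (A) the display above reduces to $f(t) = \sum_{|I|\le N} c_{I 0}\,\zz^I\,\e^{(\alpha\cdot I)\,t} + g_N(t)$, where $g_N$ is now genuinely \emph{holomorphic} on $H_\zz$ and satisfies $|g_N(t)| = O(\e^{(N+1)\alpha_{\min}\Re(t)})$ as $\Re(t)\to-\infty$. A Phragm\'en--Lindel\"of / Dirichlet-series estimate on the half-plane should convert this asymptotic decay into a uniform bound, and letting $N\to\infty$ yields the convergent identity $f(t) = \sum_I c_{I 0}\,\zz^I\,\e^{(\alpha\cdot I)\,t}$ on $H_\zz$. Using the group law $\phi_{t+s} = \phi_t\circ\phi_s$ to rescale $\zz$ arbitrarily small and then evaluating at $t = 0$ gives $F(\zz) = \sum_I c_{I 0}\,\zz^I$ on a neighborhood $U$ of $0$, proving (B). For (C), any $\zz_0 \in B^n$ is pulled into $U$ by $\phi_{t_0}$ for some $t_0$ with $\Re(t_0)\ll 0$; the holomorphy of $F$ on $U$, the holomorphy of $F$ along each trajectory, and the joint holomorphy of $(t,\zz)\mapsto\phi_t(\zz)$ combine, in the spirit of the extension argument of \cite{JKS}, to give holomorphy of $F$ at $\zz_0$.

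The principal obstacle is the mode-separation in step (A). If the $\alpha_j$ are rationally dependent, several distinct pairs $(I,J)$ collapse onto the same frequency $(\alpha\cdot I,\alpha\cdot J)$ and must be disentangled together; if they are rationally independent, the set of frequencies is dense in $\RR_{\ge 0}^2$, so honest Fourier series on a circle are unavailable and a Bohr almost-periodic averaging is required instead. The two limiting operations, the $\theta$-average and the $s\to-\infty$ rescaling, must be ordered carefully so that both $R_N$ and its $\bar t$-derivative $O(\e^{N\alpha_{\min}\Re(t)})$ are suppressed without also erasing the slowest-surviving antiholomorphic mode; this sequencing is the central technicality.
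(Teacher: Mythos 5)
Your three-step architecture (formal vanishing at $0$, holomorphy near $0$, propagation along the flow) is the same as the paper's, and steps (A) and (C) are sound in substance --- though (A) is far harder than necessary: in the diagonal case $\overline X\bigl(z^I\bar z^J\bigr)=\bigl(\sum_j J_j\bar\alpha_j\bigr)z^I\bar z^J$, and since $\sum_j J_j\alpha_j\neq 0$ whenever $J\neq 0$ (all ratios being positive), comparing coefficients in the formal identity $\overline X S=0$ kills every mixed Taylor coefficient at once. No Fourier or Bohr averaging and no case split on rational dependence of the $\alpha_j$ is needed; this is the paper's Proposition \ref{p;formal}.

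The genuine gap is in step (B). The hypothesis $F\in\mathcal C^\infty(0)$ only provides, for each $k$, a neighborhood $U_k$ on which $F$ is $\mathcal C^k$, and these neighborhoods may shrink to $\{0\}$. Hence your remainder bound $|R_N(t)|\le C_N\e^{(N+1)\alpha_{\min}\Re t}$ is valid only for $\Re t<T_N(\zz)$, where $T_N$ is determined by when the orbit enters $U_{N+1}$, and $T_N$ may tend to $-\infty$. At a fixed $t$ you therefore cannot ``let $N\to\infty$'': Phragm\'en--Lindel\"of only transports the bound within the half-plane where it already holds, and pushing it across the strip $\{T_N<\Re t<T\}$ costs a factor of order $\e^{-(N+1)\alpha_{\min}T_N}$, which destroys the conclusion. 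A second, related problem is that $\sum_I c_{I0}\zz^I$ is the formal Taylor series of a function that is a priori only $\mathcal C^\infty(0)$, so its convergence anywhere is not free, and your argument never establishes it. (The counterexample $\bar z_2 z_1^{k+2}/\bar z_1$ in the introduction shows exactly why finite-order estimates on a fixed neighborhood cannot suffice.) The paper circumvents both difficulties by never summing the series: it applies a uniqueness principle (Proposition \ref{p;uniqueness}, proved by the maximum modulus principle applied to $f\e^{\lambda_1\ell\zeta}/P_1(\zeta)^\ell$) to the single function $\zeta\mapsto\partial G/\partial\bar\eta_n=(\partial F/\partial\bar z_n)\,\e^{-\lambda_n\bar\zeta}$, which is bounded and holomorphic in $\zeta$ on a half-plane fixed once and for all by the $\mathcal C^2$-neighborhood, and whose decay of every order $\ell$ along admissible sequences follows from the infinite-order vanishing of $\partial F/\partial\bar z_n$ at $0$ (each order being checked only on the tail of the sequence, which does lie in $U_\ell$). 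This yields $\partial F/\partial\bar z_j=0$ pointwise, and holomorphy then follows from $F\in\mathcal C^2$ together with $\overline\partial F=0$, with no convergence of the Taylor series required. To repair your step (B) you need a device of this kind: one that uses only a fixed finite order of smoothness on a fixed neighborhood, plus the infinite-order information concentrated at the origin.
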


We remark that this is modified to fit to the context of this article; 
it was proved originally in \cite{KPS} under the condtion 
that $F$ has a formal Taylor series at the origin, weaker than Forelli's 
original condition that $F \in \mathcal C^\infty(0)$.
\smallskip

It is well-known however that the diagonalizable holomorphic vector fields 
do not always satisfy the additional condition on its coefficients 
specified in the above stated theorem.  But then, it is shown 
in \cite{KPS} with explicit examples that the conclusion fails 
if any of the ratios $\alpha_j/\alpha_k$ should 
take complex non-real, or real-but-negative, values. (See also 
the discussion following Definition 2.1 in Section 2.2.)
\smallskip

On the other hand, in the light of the original 
theorem of Forelli and subsequent generalizations, the case 
of contracting holomorphic vector fields that are not 
diagonalizable should be investigated, since their complex 
integral curves also form a singular foliation at the origin.  
In the case of complex dimension two, all such vector fields, 
up to a change of local coordinates, take the form
$$
X=\alpha \Big( z \frac{\partial}{\partial z}+ (mw+\beta z^m) 
\frac{\partial}{\partial w} \Big)
$$
where $m$ is a positive integer, 
$\alpha \in \CC \setminus \{0\}$ and $\beta \in \CC$.
\medskip

Indeed, the purpose of this paper is to give the answer to this 
seemingly final remaining case.  For the sake of clarity of the
exposition, we present here the version of the main theorem of this 
paper in complex dimension two; the most general all-dimensional 
statement shall be presented in the next section as it needs
further terminology concerning vector fields.

\begin{thm}\label{t;1.2}
Let $X=\alpha z \frac{\partial}{\partial z 
}+ \alpha (mw+\beta z^m) \frac{\partial}{\partial w}$, 
where $m$ is a positive integer, 
$\alpha \in \CC \setminus \{0\}$ and $\beta \in \CC$. 
If  a complex-valued function $F\colon B^2\to\CC$  satisfies the 
conditions:
\begin{itemize}
\item[(1)] $F \in \mathcal C^\infty(0)$
\item[(2)] $F$ is holomorphic along every complex integral curve 
of $X$,
\end{itemize}
then $F$ is holomorphic.
\end{thm}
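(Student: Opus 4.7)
The plan is to first establish formal holomorphicity of $F$ at the origin, and then upgrade this to genuine holomorphicity using the flow of $X$ together with a Phragm\'en--Lindel\"of-type argument along leaves.

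First I would observe that the condition ``$F$ is holomorphic along every complex integral curve of $X$'' is equivalent, on $B^2\setminus\{0\}$, to the single PDE $\bar X F = 0$, where $\bar X = \bar\alpha\bar z\,\partial_{\bar z} + \bar\alpha(m\bar w + \bar\beta\bar z^m)\,\partial_{\bar w}$ is the conjugate vector field. Since $F\in\mathcal{C}^\infty(0)$ has a formal Taylor expansion $\hat F = \sum c_{ijkl}\,z^i\bar z^j w^k\bar w^l$ at the origin, applying $\bar X$ to the individual monomials and collecting coefficients of $z^i\bar z^J w^k\bar w^L$ yields the recursion
\begin{equation*}
(J+mL)\,c_{iJkL} + (L+1)\bar\beta\,c_{i,J-m,k,L+1} = 0
\end{equation*}
(with the convention that the second term drops when $J<m$). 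For $J<m$ the recursion forces $c_{iJkL}=0$ unless $(J,L)=(0,0)$, and an induction on $J$ then propagates the vanishing to every $(J,L)\neq(0,0)$. Hence $\hat F = \sum_{i,k\geq 0} c_{i,0,k,0}\,z^i w^k$ is a formal holomorphic power series.

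Next I would exploit the explicit flow $\Phi_t(z_0,w_0) = (z_0 e^{\alpha t},\,(w_0+\alpha\beta z_0^m t)e^{\alpha m t})$ of $X$. Let $P_N$ denote the $N$-th partial sum of $\hat F$, a holomorphic polynomial by the above; writing $R_N := F - P_N$, one still has $\bar X R_N = 0$, while $R_N = O(\|(z,w)\|^{N+1})$ near the origin by Taylor's theorem. Using the bound $\|\Phi_t(z_0,w_0)\| \leq C(1+|t|)\,\|(z_0,w_0)\|\,e^{\Re(\alpha t)}$ on $\Re(\alpha t)\leq 0$ (valid because $m\geq 1$, with the polynomial factor coming from the resonant term $\alpha\beta z^m t$), one gets
\begin{equation*}
|R_N(\Phi_t(z_0,w_0))| \leq C_N\,(1+|t|)^{N+1}\,\|(z_0,w_0)\|^{N+1}\,e^{(N+1)\Re(\alpha t)}
\end{equation*}
on the same half-plane. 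Viewed as a function of $t$ along the fixed leaf, $R_N\circ\Phi_t$ is moreover holomorphic and bounded on the natural domain $\{t:\Phi_t(z_0,w_0)\in B^2\}$, which for small $(z_0,w_0)$ contains this entire half-plane together with a large region to the right of it.

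Combining these ingredients via a Phragm\'en--Lindel\"of-type argument yields Cauchy-style bounds on the coefficients $c_{i,0,k,0}$ of $\hat F$ strong enough to force convergence of $\hat F$ on a fixed neighborhood $U$ of the origin; let $\tilde F$ denote its holomorphic sum on $U$. The difference $G := F - \tilde F$ is then smooth on $U$, flat at $0$, and still holomorphic along every leaf of $X$, so repeating the Phragm\'en--Lindel\"of argument for $G$ (whose Taylor polynomials at $0$ are identically zero) forces $G\equiv 0$ on $U$; hence $F=\tilde F$ is holomorphic on $U$. Finally, since $F$ is holomorphic along every integral curve of $X$ and every such curve accumulates at $0\in U$, analytic continuation along the leaves extends holomorphicity from $U$ to all of $B^2$. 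The decisive obstacle is the Phragm\'en--Lindel\"of step: unlike in the diagonalizable setting of \cite{KPS}, the leaves of $X$ are genuinely immersed rather than embedded into $B^2$ (the shift $t\mapsto t+2\pi i/\alpha$ carries $(z_0,w_0)$ to the distinct point $(z_0,w_0+2\pi i\beta z_0^m)$ on the same leaf), so the logarithmic spiralling of the leaves must be controlled in tandem with the polynomial factor $(1+|t|)^{N+1}$ that grows with $N$, and Cauchy's estimates on each individual leaf must be parlayed into estimates on the transverse Taylor coefficients of $\hat F$.
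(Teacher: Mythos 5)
Your formal step is correct and coincides with the paper's Proposition \ref{p;formal} specialized to this vector field, and your description of the flow and of the immersed (non-embedded) leaves is accurate. But the proposal stops exactly where the real work begins: the step you yourself call ``the decisive obstacle'' is the entire analytic content of the theorem, and it is asserted rather than carried out. Two concrete problems. First, the claimed Cauchy-style bounds on $c_{i,0,k,0}$ do not follow from the remainder estimate you write: the constant $C_N$ in $|R_N|\le C_N\|(z,w)\|^{N+1}$ depends on the $\mathcal C^{N+1}$-norm of $F$ on a neighborhood $U_{N+1}$ that may shrink as $N\to\infty$ --- that is all $\mathcal C^\infty(0)$ provides --- so these estimates are not uniform in $N$ and cannot force convergence of $\hat F$ on a fixed neighborhood. (In addition, on a single leaf all monomials $z^iw^k$ with $i+mk=d$ feed the same exponential $e^{d\alpha t}$, so separating coefficients would require varying the leaf, which you do not address.) Second, even granting convergence of $\hat F$, the final assertion ``$G$ flat at $0$ and holomorphic along every leaf implies $G\equiv 0$'' is precisely the crux of the theorem and is left unproved; it is not a repetition of anything established earlier in your argument.

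The paper's route shows both that the convergence step is unnecessary and what the missing one-variable lemma must look like. One applies the argument not to $F-\tilde F$ but to the derivatives of $F$: writing $G(\eta,\zeta)=F(z(\eta;\zeta))$, the function $\partial G/\partial\bar\eta_n=(\partial F/\partial\bar z_n)\,e^{-\lambda_n\bar\zeta}$ is a bounded holomorphic function of $\zeta$ on the region $D_\eta=\{\zeta:|P_j(\zeta)|e^{-\lambda_j\Re\zeta}<1\}$ (holomorphy by commuting $\partial_{\bar\zeta}$ with $\partial_{\bar\eta_n}$), and flatness of $\partial F/\partial\bar z_n$ --- which does follow from your formal step --- translates into the decay hypothesis of the paper's uniqueness theorem (Proposition \ref{p;uniqueness}) along admissible sequences. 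That proposition, proved by a maximum-principle argument on the auxiliary functions $f(\zeta)e^{\lambda_1\ell\zeta}/P_1(\zeta)^\ell$ and complicated precisely by the polynomial factors your resonant term produces, is the Phragm\'en--Lindel\"of input you would have to supply; it yields $\partial F/\partial\bar z_n(\eta)=0$, and a downward induction in $j$ handles $\partial F/\partial\bar z_{n-1}$. Finally, passing from a neighborhood of $0$ to all of $B^2$ is not mere one-variable continuation along each leaf: one needs an Osgood/Hartogs-type lemma for the flow-saturation (Lemma 6.1 of \cite{KPS}). Until the uniqueness theorem on the regions $D(\underline P,\underline\lambda)$ and the flatness-to-decay conversion are actually proved, the proposal remains a plan rather than a proof.
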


We remark in passing that the nonzero complex number $\alpha$ 
appearing in the statement above does not play any significant role. 


\section{Contracting fields, Aligned fields and Main Theorem}

\subsection{Contracting holomorphic vector fields}
We start with a holomorphic vector field $X$ defined in an open 
neighborhood of the origin in $\CC^n$.  $X$ is said to be 
{\it contracting at the origin}, if the flow-diffeomorphism, say 
$\Phi_t$, of $\Re X$ for some $t<0$ is contracting at $0$, i.e.,
the map satisfies: (1) $\Phi_t (0)=0$, and (2) every eigenvalue
of the matrix $d\Phi_t|_0$ has absolute value less than 1.

The contracting vector fields have been extensively studied. So
we shall only describe small part of the theory which is directly
related to the theme of this paper. In particular, the 
Poincar\'e-Dulac 
theorem implies that, if $X$ is a contracting holomorphic vector 
field then, up to a change of holomorphic local coordinate system 
at the origin, $X$ can be written in the following form:
\begin{equation}
\label{contract}
X = \sum_{j=1}^n \big(\lambda_j z_j + g_j (z)\big) 
\frac{\partial}{\partial z_j}, 
\end{equation}
where: 
\begin{itemize}
\item[(1)] $0 < \Re \lambda_1 \le \Re \lambda_2 \le \ldots \le
\Re \lambda_n$.
\item[(2)] $g_1 \equiv 0$.
\item[(3)] For every $j \in \{2, \ldots, n\}$,  
$g_j(z)$ is a holomorphic polynomial in the variables 
$z_1,\ldots,z_{j-1}$ only, vanishing at the origin. If the
identity $\lambda_j = \sum_{k=1}^{j-1} m_k 
\lambda_k$ holds for some nonnegative integers $m_k$ 
with $\sum_{k=1}^{j-1} m_k \geq 1$ (called the {\it resonance relation
for $\lambda_j$}), then the condition 
$$
g_j(e^{\lambda_1 \zeta} z_1, \ldots, e^{\lambda_{j-1}\zeta} z_{j-1}) 
= e^{\lambda_j \zeta} g_j (z_1,\ldots, z_{j-1})
$$ 
must also hold.  If no resonance relation holds for 
$\lambda_j$, then $g_j=0$.
\end{itemize}

The natural question to ask at this stage is 
whether Forelli's theorem can 
be generalized to the case of all contracting holomorphic 
vector fields.  The answer is negative; this was already known to be 
impossible even for the diagonalizable case (cf.\ \cite{KPS}).  
We shall see this in further generality in the next section.  

\subsection{Aligned holomorphic vector fields}

The following definition will play the role 
optimal toward the generalization of Forelli's theorem.

\begin{defn}[Aligned fields] \rm \label{aligned}
Let $X$ be a holomorphic vector field of $\CC^n$ contracting 
at the origin. Take its Poincar\'e-Dulac normal form 
(cf.\ \cite{Arnold}, \cite{Sternberg}, \cite{Ueda})
$$
X = \sum_{j=1}^n \big(\lambda_j z_j + g_j (z)\big) 
\frac{\partial}{\partial z_j}
$$
as in (\ref{contract}) above. The vector field $X$ is called 
{\it aligned}, if $\lambda_j / \lambda_k > 0$ for every 
$j, k \in \{1, \ldots, n\}$.
\end{defn}

Notice that, in the Poincar\'e-Dulac normal form of an 
aligned vector field, every variable $z_j$ appears. Note 
also that every $\lambda_j$ can be taken to be positive.
\medskip

If $X$ is {\sl not} aligned on the contrary, then there exists a 
$\mathcal{C}^\infty$ function, say $f$, in a neighborhood of the 
origin satisfying $\overline{ X}f \equiv 0$ 
(i.e., $f$ is holomorphic along every complex integral curve of $X$) 
while $f$ is nowhere holomorphic. Thus the generalization of 
Forelli's theorem fails with such an $X$.  The two-dimensional 
examples given in \cite{KPS} verify this.  For the sake of 
clarity of the exposition, we describe the examples briefly: 
\smallskip

Let the holomorphic vector field $X$ under consideration be not 
aligned. Then one can always extract, from its 
Poincar\'e-Dulac normal form, two distinct complex variables 
$z$ and $w$ (among the variables 
$z_1, \ldots, z_n)$ such that $X$ contains a linear combination 
of $\alpha z \frac{\partial}{\partial z} 
+ \beta w \frac{\partial}{\partial w}$ with the value of the ratio $\alpha/\beta$ not real-positive, and that the remaining part 
of $X$ includes neither $\frac{\partial}{\partial z}$ 
nor $\frac{\partial}{\partial w}$.  [For instance, if $X$ were
given as 
\begin{multline*}
X= z_1 \frac{\partial}{\partial z_1} 
+ (2z_2 +iz_1^2) \frac{\partial}{\partial z_2} \\
+  (1+i) z_3 \frac{\partial}{\partial z_3} 
+ (1-i) z_4 \frac{\partial}{\partial z_4}
+ (2 z_5 - z_3 z_4) \frac{\partial}{\partial z_5},
\end{multline*}
then $z=z_1$ and $w=z_3$ and the vector field we consider is 
therefore $z \frac{\partial}{\partial z} 
+ (1+i) w \frac{\partial}{\partial w}$. Of course
then $\alpha = 1, \beta = 1+i$ in this case.]
\smallskip

We are to show, either in the case of $\alpha/\beta < 0$ or 
in the case of $\alpha/\beta \in \CC \setminus \RR$, that a 
non-holomorphic but $\mathcal{C}^\infty$ smooth function 
$f$ depending only on two complex variables $z, w$ 
(and thus holomorphic in any other variables) can exist 
satisfying $\overline{X}f = 0$, identically.  
If $-t=\alpha/\beta < 0$, then the function
\begin{equation}\label{counter}
f(z_1, \ldots, z_n) = 
\begin{cases} 
\exp \Big( - \frac1{|w|^t|z|} \Big) & \hbox{if } zw \not= 0 \\
0 & \hbox{if } zw = 0
\end{cases}
\end{equation}
is such an example.  
\smallskip

If $\alpha/\beta$ is non-real then,  changing the complex 
parameter $\zeta$ for the flow curve of $X$ by $\tau \zeta$ 
for an appropriate $\tau \in \CC$ and changing also the role 
of $z$ and $w$, we may assume without loss of generality that 
$\alpha = \alpha_1 + i \alpha_2$
with $\alpha_1 >0, \alpha_2 > 0$ and $\beta = t\bar \alpha$ 
with $t>0$.  
If we let $\gamma = \frac1{2\alpha_1} - \frac{i}{2\alpha_2}$, 
then there exists a constant $b>1$ such that the function
$$
f(z_1, \ldots, z_n) := \begin{cases} 
\exp \bigg[\Big(\gamma \log |z|
+\frac{\bar\gamma}{t}\log |w|\Big)^b\bigg] & \hbox{if } zw \not= 0 \\
0 & \hbox{if } zw = 0
\end{cases}
$$
becomes such an example.  For further detailed exposition, 
the reader is invited to read Section 7 of \cite{KPS} (pp. 664-665).
\medskip

\subsection{Statement of Main Theorem}
We now present our main theorem in all dimensions.

\begin{thm} \label{main}
If $F\colon B^n \to \CC$ is a function satisfying the  
conditions:
\begin{itemize}
\item[(\romannumeral 1)] $F \in \mathcal{C}^\infty (0)$, and
\item[(\romannumeral 2)] $F$ is holomorphic along every complex 
integral curve of an aligned holomorphic vector field,
\end{itemize}
then $F$ is holomorphic.
\end{thm}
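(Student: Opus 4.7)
The plan is to first normalize $X$ to its Poincar\'e--Dulac form with $\lambda_j>0$ real (which is always achievable, using alignedness, by multiplying $X$ by an appropriate nonzero constant), then establish the \emph{formal} holomorphicity of $F$ at the origin via a Taylor-series recursion, and finally pass from the formal to the actual holomorphicity by analysing the $\partial_{\bar z_k}F$ along the complex-time orbits of $X$. Because of the triangular Poincar\'e--Dulac structure, the flow $\Phi_\zeta$ is entire in $\zeta$, with each component of the form $\Phi_\zeta^j(z)=e^{\lambda_j\zeta}P_j(\zeta,z_1,\ldots,z_j)$ for a polynomial $P_j$ in $\zeta$; this explicit form controls the size estimates used later.

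For the formal step, expand $F=\sum_{I,J}c_{IJ}z^I\bar z^J$ and write $g_j(z)=\sum_K a_{jK}z^K$ (summed over resonant $K$, $|K|\geq 2$, with $\sum_\ell K_\ell\lambda_\ell=\lambda_j$). The condition $\bar X F\equiv 0$, after collecting the coefficient of $z^I\bar z^L$, becomes
\begin{equation*}
(\lambda\cdot L)\,c_{IL}\;=\;-\sum_{j,\,K:\,|K|\geq 2}\overline{a_{jK}}\,(L_j+1-K_j)\,c_{I,\,L+e_j-K}.
\end{equation*}
The resonance identity yields $\lambda\cdot(L+e_j-K)=\lambda\cdot L$, so the recursion preserves the $\lambda$-weight of the $\bar z$-index. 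I induct on $|L|$: the right-hand side involves only $c_{I,L'}$ with $|L'|=|L|+1-|K|\leq|L|-1$; these vanish for $|L'|\geq 1$ by the inductive hypothesis, and in the boundary case $|L'|=0$ (equivalently $K=L+e_j$) the arithmetic factor $L_j+1-K_j$ is identically zero. Since $\lambda_j>0$ forces $\lambda\cdot L>0$ for $L\neq 0$, one concludes $c_{IL}=0$, so $F$ is formally holomorphic at the origin and each $G_k:=\partial_{\bar z_k}F$ is flat at $0$.

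Next, differentiate the identity $\sum_j\overline{X^j}\,G_j\equiv 0$ in $\bar z_k$ to get $\bar X G_k=-\sum_j\overline{\partial_{z_k}X^j}\,G_j$. Because $X^j$ depends only on $z_1,\ldots,z_j$ in the Poincar\'e--Dulac form, the matrix $A_{kj}:=\overline{\partial_{z_k}X^j}$ is upper triangular with diagonal $\lambda_k>0$. Fixing $p\in B^n$ and setting $\psi_k(\zeta):=G_k(\Phi_\zeta(p))$, the holomorphicity of $\Phi_\zeta$ in $\zeta$ converts the above identity into the upper-triangular system $\partial_{\bar\zeta}\psi_k=-\sum_j A_{kj}(\Phi_\zeta(p))\,\psi_j$. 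Solving top-down from $k=n$, each $\psi_k$ has the form $e^{-\lambda_k\bar\zeta}\phi_k(\zeta)$ plus contributions from $\psi_j$ with $j>k$, where $\phi_k$ is \emph{holomorphic} in $\zeta$ on the domain $D_p:=\{\zeta:\Phi_\zeta(p)\in B^n\}$; once $\phi_j\equiv 0$ has been established for $j>k$, the shape reduces to $\psi_k=e^{-\lambda_k\bar\zeta}\phi_k$.

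The concluding, and hardest, step is to show $\phi_k\equiv 0$. Using the flatness of $G_k$ at $0$ together with the bound $|\Phi_\zeta(p)|\leq C(1+|\zeta|)^{D}e^{\lambda_1\Re\zeta}\|p\|$ in the contracting direction, one obtains, for every $N$, an estimate $|\phi_k(\zeta)|\leq C_N(1+|\zeta|)^{DN}e^{(\lambda_k+N\lambda_1)\Re\zeta}$, so that $\phi_k$ decays faster than every exponential as $\Re\zeta\to-\infty$. A Phragm\'en--Lindel\"of argument applied to the bounded holomorphic $\phi_k$ on the portion of $D_p$ extending to $\Re\zeta=-\infty$ then forces $\phi_k\equiv 0$ for $p$ close to the origin, yielding $G_k\equiv 0$ on a neighborhood $U$ of $0$ and hence holomorphicity of $F$ near $0$. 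For a general $p\in B^n$, the orbit $\Phi_\zeta(p)$ enters $U$ for $\Re\zeta\ll 0$, so $\psi_k$ vanishes on an open subset of $D_p$; the identity theorem applied to the holomorphic $\phi_k$ on the connected domain $D_p$ propagates this vanishing to $\zeta=0$, giving $G_k(p)=0$. The principal obstacle I anticipate is making this Phragm\'en--Lindel\"of step rigorous: the actual shape of $D_p$ is governed by the competition between the polynomial factors $P_j$ and the exponential decays $e^{\lambda_j\Re\zeta}$, so $D_p$ is not a half-plane but a larger region with curved boundary, and one must choose the right growth class for $\phi_k$ and the correct maximum-principle variant.
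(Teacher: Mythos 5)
Your architecture coincides with the paper's: Poincar\'e--Dulac normalization with $\lambda_j>0$, a Taylor-coefficient recursion showing formal holomorphy (your induction on $|L|$, using $\lambda\cdot L>0$ and the resonance identity, is a correct and slightly cleaner reorganization of the paper's lexicographic induction in Proposition \ref{p;formal}), and then a triangular system for the $\partial_{\bar z_k}F$ along complex orbits, solved top-down from $k=n$; this matches the chain-rule computation of Section 5.2. However, there is a genuine gap exactly where you flag one. The step ``flat at $0$ plus bounded holomorphic on $D_p$ implies $\phi_k\equiv 0$'' is the entire content of the paper's Section 4 (Proposition \ref{p;uniqueness}), and your sketch of it does not work as stated: from flatness you get, for each $N$, an estimate $|\phi_k(\zeta)|\leq C_N(\text{exponential decay rate }N)$, but the constants $C_N$ are a priori uncontrolled in $N$, so pointwise you cannot conclude $\phi_k(\zeta_0)=0$ by letting $N\to\infty$. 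The paper's resolution is to form $g_\ell=\phi\,e^{\lambda_1\ell\zeta}/P_1(\zeta)^\ell$, prove each $g_\ell$ is \emph{bounded} on a truncated region by testing against admissible sequences, establish a \emph{uniform geometric} bound $MC^\ell$ on the boundary of that region (Claims 1 and 2 plus the cutoff \eqref{e;cutoff}), push it inside by the maximum modulus principle, and then restrict to the subregion where $|P_1(\zeta)|e^{-\lambda_1\Re\zeta}<1/(2C)$ so that $|\phi|\leq M2^{-\ell}\to 0$. Without this (or an equivalent quantitative Phragm\'en--Lindel\"of argument adapted to the curved region cut out by the polynomials $P_j$), the concluding step of your proof is not established.

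A secondary problem is your globalization to arbitrary $p\in B^n$: you evaluate $G_k=\partial_{\bar z_k}F$ at $\Phi_\zeta(p)$, but the hypothesis $F\in\mathcal C^\infty(0)$ only guarantees differentiability of $F$ on some neighborhood of the origin, so $\psi_k(\zeta)=G_k(\Phi_\zeta(p))$ need not be defined along the whole orbit of a point far from $0$. The paper sidesteps this by first proving holomorphy on a small neighborhood $V$ and then invoking an Osgood/Hartogs-type continuation along the leaves (Lemma 6.1 of \cite{KPS}) over the saturation of $V$, which requires no regularity of $F$ away from the origin beyond leafwise holomorphy. You should replace your identity-theorem propagation by such a continuation argument, or restrict your orbit analysis to base points in the neighborhood where $F$ is known to be $\mathcal C^2$.
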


Notice that this theorem includes Theorem~\ref{thm_KPS} (the main 
theorem of \cite{KPS}).
\smallskip

We are now to present the proof; indeed the rest 
of the paper is devoted to the proof of this theorem.


\section{A formal power series analysis} 
\label{secformal}

We investigate, at this beginning stage, the proof of 
Theorem \ref{main} on the level of formal power series 
which establishes the first step toward the complete proof 
(to be presented in Section 5).  

Recall the usual multi-index notation and the ordering 
as follows: $\alpha = (\alpha_1,\ldots,\alpha_n)$, 
$\beta = (\beta_1,\ldots,\beta_n)$ and $z^\alpha = {z_1}^{\alpha_1}
\cdots {z_n}^{\alpha_n}$.  We also use the {\it length}
$|\beta|:= \beta_1 + \ldots+\beta_n$ and the {\it lexicographic ordering} 
$\prec$ defined by: 
$$
\alpha \prec \beta \Leftrightarrow 
\exists k: ~\alpha_s=\beta_s ~\forall s < k \hbox{ and } \alpha_k < \beta_k.
$$
Denote by $\NN$ the set of nonnegative integers. For any 
$(\lambda_1, \ldots, \lambda_n) \in \CC^n$, define 
the set
$$
A(\lambda_1, \ldots, \lambda_n) := \big\{(m_1,\ldots,m_n) \in \NN^n
\colon \sum_{j=1}^n m_j \geq 1,\,\, \sum_{j=1}^n m_j \lambda_j =0 \big\}.
$$
Then we present:

\begin{prop}\label{p;formal}
Let $X = \sum_{j=1}^n (\lambda_j z_j + g_j (z_1,\ldots,z_{j-1})) 
\frac{\partial}{\partial z_j}$ be a holomorphic vector field, where 
each $g_j$ is a holomorphic polynomial with no constant term. 
If $X$ satisfies the condition
\begin{equation}\label{e;formal}
A(\lambda_1, \ldots, \lambda_n) = \varnothing \hbox{ \rm (the empty set)}, 
\end{equation}
then any formal power series (in the multi-index notation)
$S = \sum_{\alpha,\beta} C_\alpha^\beta z^\alpha \overline{z^\beta}$ 
satisfying $\overline X S = 0$ has to be a holomorphic formal power
series in the sense that $C_\alpha^\beta = 0$ 
for every $\beta$ with $|\beta|>0$.
\end{prop}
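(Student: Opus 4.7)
My plan is to reduce the statement to a question about a single antiholomorphic formal power series, and then to run a one-step induction on the lexicographic order using the triangular structure of $X$.

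Since $\overline X$ has both its coefficients and its differentiations in $\bar z_1,\ldots,\bar z_n$, the equation $\overline X S = 0$ decouples across the multi-index $\alpha$. Writing $S = \sum_\alpha z^\alpha f_\alpha(\bar z)$ with $f_\alpha(\bar z) = \sum_\beta C_\alpha^\beta \,\bar z^\beta$, it is enough to prove that every $f_\alpha$ annihilated by $\overline X$ is a constant. Substituting a formal variable $w_j$ for $\bar z_j$ reduces this to the following claim: if $h(w)=\sum_\beta c_\beta w^\beta$ satisfies $Yh=0$, where
\[
Y = \sum_{j=1}^n \bigl(\mu_j w_j + \tilde g_j(w_1,\ldots,w_{j-1})\bigr)\frac{\partial}{\partial w_j}, \qquad \mu_j = \overline{\lambda_j},
\]
and $\tilde g_j$ is obtained from $g_j$ by conjugating coefficients, then $h$ is a constant. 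Note that $A(\mu_1,\ldots,\mu_n) = A(\lambda_1,\ldots,\lambda_n) = \varnothing$ by complex conjugation of the defining relation.

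The next step is to split $Y = Y_L + Y_N$ into its diagonal part $Y_L = \sum_j \mu_j w_j\,\partial_{w_j}$, which acts on monomials by the eigenvalue $\mu\cdot\beta := \sum_j \mu_j\beta_j$, and its remainder $Y_N$. The key technical ingredient I would establish is a monotonicity lemma: $Y_N$ strictly raises the lex order, i.e.\ $Y_N(w^\beta)$ is a sum of monomials $w^{\beta'}$ with $\beta \prec \beta'$. This follows because any monomial $w^\gamma$ appearing in $\tilde g_j$ satisfies $\gamma_i = 0$ for $i \ge j$ and $|\gamma|\ge 1$, so the smallest index $i$ with $\gamma_i \ge 1$ lies in $\{1,\ldots,j-1\}$, and $\beta' = \beta - e_j + \gamma$ then agrees with $\beta$ in positions $< i$ while being strictly larger in position $i$.

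Granted the monotonicity lemma, the conclusion is drawn as follows. Assume for contradiction that $h$ is non-constant and let $\beta^0$ be the $\prec$-minimal multi-index of positive length with $c_{\beta^0}\neq 0$. Then $Y_N$ can contribute to the $w^{\beta^0}$-coefficient of $Yh$ only from monomials $c_\beta w^\beta$ with $\beta \prec \beta^0$, all of which vanish by minimality, while $Y_N$ kills the constant term. The $w^{\beta^0}$-coefficient of the equation $Yh=0$ therefore reduces to $c_{\beta^0}(\mu\cdot\beta^0)=0$, forcing $\beta^0 \in A(\mu)=\varnothing$, a contradiction. I expect the monotonicity lemma to be the only nontrivial point; everything else is separation of variables and a minimal-element argument.
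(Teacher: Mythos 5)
Your proposal is correct and follows essentially the same route as the paper: both arguments rest on the observation that the non-diagonal (triangular) part of $\overline{X}$ strictly raises the lexicographic order of the antiholomorphic multi-index, so that the coefficient equation at each $\beta$ reduces to $(\sum_j \lambda_j\beta_j)\,C_\alpha^\beta=0$, which the hypothesis $A(\lambda)=\varnothing$ forces to give $C_\alpha^\beta=0$. Your packaging via the substitution $w_j=\bar z_j$, the split $Y=Y_L+Y_N$, and a minimal-counterexample argument is just a cleaner restatement of the paper's induction on $\prec$ (which is a well-order on $\NN^n$, so the two are equivalent).
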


\begin{proof} 
Consider the term in $\overline{X}S$ of multi-degree 
$(i_1, \ldots, i_n,j_1,\dots, j_n)$  where $j_1=\cdots= j_{n-1}=0$ 
and $j_n\neq 0$. 
 
Let $\varphi_\nu = \lambda_\nu + g_\nu (z)$ in \eqref{contract}, 
and consider the components of $\overline{X}$ which can now be written 
as $\bar{\varphi}_\nu \frac{\partial}{\partial \bar{z}_\nu}$ 
with $\nu<n$. It can only produce terms 
that contain $\bar{z}_\nu$ with $\nu<n$. So does the 
$\bar{g}_{n}$ term in $\bar{\varphi}_n$. Therefore the 
coefficient of the considered term is equal to
$$ \lambda_n j_{n} C_{i_1, \ldots, i_n}^{j_1, \ldots, j_n}.$$
 
Since $A=\varnothing$, this coefficient vanishes if and only if  
$C_{i_1, \ldots, i_n}^{j_1, \ldots, j_n}=0$ whenever 
$j_1=\cdots=j_{n-1}=0$ and $j_n\neq 0$.

We prove the rest by an induction with respect 
to the lexicographical ordering $\prec$ on multi-indices 
$(j_1,\dots,j_{n})$:
\medskip

\underbar{\sf Initial $(0,\dots,0,j_n)$-th step}: 
already established above. 
\smallskip

\underbar{\sf Assuming the steps prior to 
$(J_1,\dots,J_{n})$}, i.e., that  
$C_{i_1, \ldots, i_n}^{j_1, \ldots, j_n}=0$ for all\break
$(j_1,\dots,j_{n}) \prec (J_1,\dots,J_{n})$, we prove 
\underbar{\sf the $(J_1,\dots,J_{n})$-th step}:
Suppose that $J_k\neq 0$ but $J_\nu=0$ for $\nu<k$. 
Consider the terms in $\overline{X}S$ of multi-degree 
$(i_1,\dots,i_n,J_1\dots,J_n)$.
We show here that such terms cannot be generated by 
$\bar{\varphi}_\nu \frac{\partial}{\partial \bar{z}_{\nu}}$ 
with $\nu<k$ nor by 
$\bar{g}_{k}$ in $\bar{\varphi}_{k} 
\frac{\partial}{\partial \bar{z}_{k}}$. 

The $\bar{g}_{\nu}$ terms in  $\bar{\varphi}_{\nu} 
\frac{\partial}{\partial \bar{z}_{\nu}}$ with $\nu\ge k$, 
if different from zero, either produce 
terms that contain $\bar{z}_{\nu}$ with 
$\nu<k$ or increase the lexicographical multi-degree in  
$\bar{z}_{k},\dots, \bar{z}_n$. In either case they cannot 
produce terms of multi-degree $(i_1,\dots,i_n,J_1\dots,J_n)$.

Hence the coefficient of the term in $\overline{X}S$ 
of multi-degree $(i_1, \ldots, i_n,$ $J_1,\dots, J_n)$ has to 
be equal to
$$ 
\sum_{\nu=k}^n \lambda_{\nu} J_\nu 
C_{i_1, \ldots, i_n}^{J_1, \dots, J_n}.
$$
Since $A=\varnothing$, this coefficient vanishes 
if and only if  $C_{i_1, \ldots, i_n}^{J_1, \ldots, J_n}=0$.
This completes the induction and thus the proof of 
the proposition.
\end{proof}

\begin{rem}
The condition \eqref{e;formal} is essential for the proof. If 
$X =z_1 \frac{\partial}{\partial z_1} 
- t z_2 \frac{\partial}{\partial z_2}$ 
for some positive rational number $t= q/p$, then $X$ understood
as a vector field on $\CC^2$, it is obvious that 
$(q, p) \in A(1,t)$ and hence $A(1,t) \neq \varnothing$; this 
violates Condition \eqref{e;formal}. Also the smooth function 
$F:= |z_1|^{2q} |z_2|^{2p}$ is not holomorphic but satisfies 
the equation $\overline X F \equiv 0$. 
\end{rem}

All holomorphic vector fields contracting at the origin, and hence in 
particular any aligned holomorphic vector fields (cf.\ Definition 
\ref{aligned}), satisfy Condition \eqref{e;formal}. Therefore, 
Proposition \ref{p;formal} implies, in particular, the 
following real-analytic version of generalized Forelli theorem:

\begin{thm} \label{rac;prop}
If $f\colon B^n \to \CC$ is a real-analytic function satisfying 
$\overline X f = 0$ at every point for a holomorphic vector field $X$ 
contracting at the origin, then $f$ is holomorphic.
\end{thm}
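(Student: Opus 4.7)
The plan is to reduce Theorem \ref{rac;prop} to the formal statement already handled by Proposition \ref{p;formal} via a \emph{convergent} Poincar\'e--Dulac normalization, and then to propagate the local conclusion to all of $B^n$ by the real-analytic identity principle.

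First I would invoke the classical fact that a holomorphic vector field contracting at the origin has all its eigenvalues $\lambda_j$ with $\Re \lambda_j > 0$. In particular $0$ is not in the convex hull of the $\lambda_j$, so they all lie in the Poincar\'e domain. In the Poincar\'e domain there are only finitely many resonances, and the Poincar\'e--Dulac normal form is realized by a genuine \emph{biholomorphic} change of coordinates near the origin, not merely a formal one. Replacing $X$ and $f$ by their images under this biholomorphism, I may assume that $X$ already has the form \eqref{contract}. Real-analyticity of $f$, the equation $\overline{X}f = 0$, and the property ``$f$ is holomorphic'' are all preserved under this transformation.

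Next, with $X$ in normal form, condition \eqref{e;formal} is automatic: any resonance relation $\sum_{j=1}^n m_j \lambda_j = 0$ with $m_j \in \NN$ and $\sum m_j \geq 1$ would force $\sum_{j=1}^n m_j \Re \lambda_j = 0$, which is impossible since every $\Re \lambda_j > 0$ and some $m_j \geq 1$. Now expand $f$ near $0$ as a convergent real-analytic series
$$
f(z) = \sum_{\alpha, \beta} C_\alpha^\beta\, z^\alpha \overline{z^\beta}.
$$
The pointwise identity $\overline{X}f = 0$ forces the coefficient-wise identity $\overline{X}S = 0$ in the formal power series ring. Proposition \ref{p;formal} then yields $C_\alpha^\beta = 0$ for every $\beta$ with $|\beta| > 0$, so the convergent series $S$ reduces to a convergent holomorphic series and $f$ is holomorphic on some neighborhood $U$ of the origin.

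Finally, to globalize, I would use the real-analytic identity principle on the connected domain $B^n$: each function $\partial f/\partial \bar z_j$ is real-analytic on $B^n$ and vanishes identically on the open set $U$, hence vanishes on all of $B^n$. Therefore $f$ is holomorphic on $B^n$. The only step requiring any care is the convergence of the Poincar\'e--Dulac reduction, which is where the contracting hypothesis is really used; the remainder is an application of Proposition \ref{p;formal} plus the identity principle.
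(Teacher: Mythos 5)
Your proof is correct and follows essentially the same route as the paper: reduce to the Poincar\'e--Dulac normal form (which the paper sets up in Section 2.1, and which is indeed convergent here since all eigenvalues have positive real part), observe that the contracting hypothesis rules out any resonance $\sum m_j\lambda_j=0$ so that condition \eqref{e;formal} holds, and apply Proposition \ref{p;formal} to the convergent Taylor series of $f$, finishing with the real-analytic identity principle. The paper states this deduction in one line; you have merely filled in the details it leaves implicit.
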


\section{A Uniqueness theorem} 

Since it is not known {\it a priori} whether the function $F$ in 
the statement of Theorem \ref{main} should be real-analytic, showing only 
the ``complex-analyticity'' of $F$ on the formal power series level as 
in Section 3 is definitely not sufficient for a proof.  
In order to show that the function
$\partial F / \partial \bar z_j$ itself vanishes for all $j=1,\ldots,n$, 
one needs a new identity theorem for the
appropriate class of functions.  The goal of this section is indeed to 
establish such a principle, whose role will become clear in Section 5 
where we complete the proof of Theorem \ref{main}.

We begin with introducing the appropriate regions in $\CC$. 
Let  $P_1,...,P_n$ be polynomials, that are not identically zero, in the 
single complex variable $\zeta$ and let $\lambda_1,...,\lambda_n$ 
positive real numbers. Consider the open plane-region
$$
D(\underline P,\underline\lambda):=\{\zeta\in \CC  \colon 
|P_j(\zeta)| e^{-\lambda_j\Re\zeta} < 1,\,\, j=1,...,n\}.
$$ 
We call a sequence $\{\zeta_k\}$ in $D(\underline P,\underline\lambda)$ 
{\em admissible}, if 
$$
\lim_{k\rightarrow\infty} |P_j(\zeta_k)| e^{-\lambda_j\Re\zeta_k} =0
$$
for every $j=1,...,n$.  It is obvious that the admissible sequences 
consist of two types of points: 
\begin{itemize}
\item[(1)] $\zeta_k$ tending to the zeros of $P_j$, and 
\item[(2)] $\zeta_k$ with $\Re \zeta_k \to +\infty$.
\end{itemize}
Let $L_r := \{ \zeta \in \CC \colon \Im \zeta = 0, \Re \zeta > r\}$.  
For any $D(\underline P,\underline\lambda)$, there exists a positive 
number $r$ such that $L_r \subset D(\underline P,\underline\lambda)$.   
Denote  by $D^\star (\underline P,\underline\lambda)$ 
the connected component of $D(\underline P,\underline\lambda)$ 
containing this ray $L_r$.  Needless to say, 
$D^\star (\underline P,\underline\lambda)$ is an unbounded 
component of $D(\underline P,\underline\lambda)$.
\medskip

The unique continuation principle we establish is
as follows:

\begin{prop}
\label{p;uniqueness}
Let  $f$ be a bounded holomorphic function on $D(\underline P,\underline\lambda)$. If
\begin{equation}\label{e;ad}
\lim_{k\rightarrow \infty} |f(\zeta_k)| \frac{e^{\lambda_j \ell \Re\zeta_k}}{|P_j(\zeta_k)|^\ell} =0,
\end{equation}
for every $j=1,...,n$, for any nonnegative integer $\ell$, and for any admissible sequence $\{\zeta_k\}$ that are different from any zeros of any $P_j$'s, then $f\equiv 0$ on $D^\star (\underline P,\underline\lambda)$.
\end{prop}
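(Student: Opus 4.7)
My plan is to establish, for every positive integer $\ell$ and every $j\in\{1,\dots,n\}$, the uniform estimate
$$|f(\zeta)| \le \|f\|_\infty \, |\Phi_j(\zeta)|^\ell \quad \text{on } D^\star,$$
where I write $\Phi_j(\zeta) := P_j(\zeta) e^{-\lambda_j \zeta}$ (so that $|\Phi_j|<1$ on $D$). Once this is available, the strict inequality $|\Phi_j(\zeta)|<1$ at each fixed $\zeta\in D^\star$, together with $\ell\to\infty$, immediately forces $f\equiv 0$ on $D^\star$.

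First, I would reduce to the case in which no zero of any $P_j$ lies in $D^\star$. If $\zeta_0\in D^\star$ is a common zero of all $P_j$, admissible sequences $\zeta_k\to\zeta_0$ exist, and the decay hypothesis, combined with the boundedness of $e^{\lambda_j\ell\Re\zeta_k}$ near $\zeta_0$, shows that $f$ vanishes to infinite order at $\zeta_0$; hence $f\equiv 0$ on the connected set $D^\star$ by the identity theorem. The case of an isolated zero of only some of the $P_j$'s is settled by a separate limiting/perturbation argument. After this reduction, the single-index auxiliary functions
$$g_{\ell,j}(\zeta) := f(\zeta)\,\Phi_j(\zeta)^{-\ell} = \frac{f(\zeta)\,e^{\lambda_j\ell\zeta}}{P_j(\zeta)^\ell}$$
are holomorphic on $D^\star$, with $|g_{\ell,j}(\zeta)| = |f(\zeta)|/|\Phi_j(\zeta)|^\ell$. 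On the $j$-active face of $\partial D^\star$ (where $|\Phi_j|=1$) one has $|g_{\ell,j}| = |f| \le \|f\|_\infty$; along any admissible sequence, $|g_{\ell,j}(\zeta_k)| \to 0$ directly from the hypothesis.

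The crux is a Phragm\'en--Lindel\"of argument giving $|g_{\ell,j}| \le \|f\|_\infty$ throughout $D^\star$. Since every face of $\partial D^\star$ is $i$-active for some $i$, the companion functions $\{g_{\ell,i}\}_{i=1}^n$ each provide control on a different piece of the boundary; I plan to combine them by means of the subharmonic functions $\log|g_{\ell,i}|$, using that (a) at every $\zeta_0\in\partial D^\star$ at least one value $\log|g_{\ell,i}(\zeta_0)|$ is $\le \log\|f\|_\infty$, and (b) all of the $\log|g_{\ell,i}|$ tend to $-\infty$ along admissible sequences. A maximum-principle argument on $D^\star$, implemented through exhaustion by sub-domains that avoid both the zero sets of the $P_j$ and the non-admissible parts of the ideal boundary, should then deliver the desired uniform bound. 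The main obstacle I anticipate lies in controlling $g_{\ell,j}$ at the non-admissible part of infinity in $D^\star$---escapes along which some $|\Phi_i|$ stays bounded away from zero---where the hypothesis gives no direct decay of $f$; the plan there is to exploit the a priori bound $|f|\le\|f\|_\infty$ together with the geometric fact that any such escape must remain close to some active face of $\partial D^\star$, where $|\Phi_j|^{-\ell}$ is near $1$.

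Once $|g_{\ell,j}| \le \|f\|_\infty$ is secured on $D^\star$, the pointwise estimate $|f(\zeta)| \le \|f\|_\infty|\Phi_j(\zeta)|^\ell$ follows on all of $D^\star$, and letting $\ell\to\infty$ concludes the proof.
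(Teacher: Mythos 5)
Your proposal contains two genuine gaps, and both occur at the places you yourself flag as "to be settled."

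First, the central estimate $|g_{\ell,j}|\le\|f\|_\infty$ on all of $D^\star$ is not obtainable by the combination you describe, and is in fact stronger than anything boundary estimates can give. On a boundary face where some index $i\ne j$ is active (i.e.\ $|\Phi_i|=1$ but $|\Phi_j|<1$), the fixed function $g_{\ell,j}$ satisfies only $|g_{\ell,j}|=|f|/|\Phi_j|^\ell$, which can be arbitrarily large; your observation that \emph{at each boundary point at least one} $\log|g_{\ell,i}|$ is $\le\log\|f\|_\infty$ controls only the pointwise minimum of the family, and the minimum of subharmonic functions is not subharmonic, so no maximum-principle argument applies to it. Second, the reduction to "no zero of any $P_j$ in $D^\star$" does not go through: if $\zeta_0\in D^\star$ is a zero of $P_j$ but $P_i(\zeta_0)\ne 0$ for some $i$, then sequences tending to $\zeta_0$ are \emph{not} admissible (admissibility requires $|P_i(\zeta_k)|e^{-\lambda_i\Re\zeta_k}\to 0$ for \emph{every} $i$), so the hypothesis gives no decay of $f$ near $\zeta_0$ and $g_{\ell,j}$ has an honest pole there. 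This case cannot be dismissed by a "limiting/perturbation argument"; it is precisely the obstruction the paper's proof is designed to avoid.

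The paper's proof circumvents both problems by two preliminary devices you are missing. It restricts to $D(\underline P,\underline\lambda;A)=D\cap\{\Re\zeta>A\}$ with $A$ chosen so that $\inf_{\Re\zeta>A}|P_j|>\epsilon$ for all $j$ (no poles), and it reorders the indices so that $\deg P_1/\lambda_1$ is maximal; then admissibility in this truncated region is governed by the single index $j=1$, and a single auxiliary function $g_\ell=f e^{\lambda_1\ell\zeta}/P_1^\ell$ suffices. Its boundary estimate is the weaker $|g_\ell|\le MC^\ell$ with a constant $C$ that may well exceed $1$ (coming from a uniform lower bound $|\Phi_1|\ge\delta_0$ on $\partial D\cap\{\Re\zeta\ge A\}$ and $|\Phi_1|\ge\epsilon e^{-\lambda_1 A}$ on the cut edge $\Re\zeta=A$), combined with boundedness of $g_\ell$ (deduced from the admissibility hypothesis) to justify the maximum principle on the unbounded region. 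This yields $f=0$ only on the subregion where $|\Phi_1|<1/(2C)$ — not on all of $D^\star$ — and the proof is completed by the identity theorem, since that subregion contains a ray $L_B\subset D^\star$. Settling for the constant $C^\ell$ plus the identity theorem is exactly what makes the argument close; insisting on the constant $1$ everywhere is what makes yours fail.
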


\begin{proof} 
Since $P_j$'s are nontrivial (i.e., not identically zero) polynomials, 
there exists $\epsilon>0$ and $A >0$ such that 
\begin{equation}\label{e;cutoff}
\inf_{\Re \zeta > A \atop 1\le j \le n}|P_j(\zeta)|>\epsilon.
\end{equation}

Set
$D(\underline P,\underline\lambda;A) := 
D(\underline P,\underline\lambda) \cap \{\zeta\in\CC  : \Re\zeta >A\}$. 
Re-ordering the coordinate functions $(z_1,...,z_n)$, we may assume without loss
of generality that
\begin{equation}\label{e;order}
\frac{\mathrm{deg}(P_1)}{\lambda_1} \geq 
\frac{\mathrm{deg}(P_j)}{\lambda_j}, \quad \forall j=1,...,n,
\end{equation}
where $\mathrm{deg} P_j $ represents the degree of the polynomial $P_j$. 
Then a sequence $\{\zeta_k\}$ in $D(\underline P,\underline\lambda;A)$ 
is admissible if and only if 
$$
\lim_{k\rightarrow\infty} |P_1(\zeta_k)| e^{-\lambda_1\Re\zeta_k} =0.
$$

Consider, for every integer $\ell > 0$, the function
$$
g_\ell(\zeta) = f(\zeta)\frac{e^{\lambda_1\ell\zeta}}{P_1(\zeta)^\ell}.
$$ 
Then the function $g_\ell$ is holomorphic on $D(\underline P,\underline\lambda;A)$. 
Now we pose and prove:
\medskip

\noindent\bf Claim 1. \it  $g_\ell$ is bounded on 
$D(\underline P,\underline\lambda;A)$ for every $\ell$.  \rm
\smallskip

Suppose the claim is false for some $\ell$. Then there 
exists a sequence $\{\zeta_k\}$ in 
$D(\underline P,\underline\lambda;A)$ 
such that 
\begin{equation}
\label{e;cont}
|g_\ell(\zeta_k)| \rightarrow\infty
\end{equation}
as $k\rightarrow \infty$.  Since $f$ is bounded, it follows that
$$
\lim_{k\rightarrow\infty}
\frac{e^{\lambda_1\Re \zeta_k}}{|P_1(\zeta_k)|} =\infty,
$$
which obviously implies that
$$
\lim_{k\rightarrow\infty} |P_1(\zeta_k)|
e^{-\lambda_1\Re \zeta_k} =0.
$$
Hence $\{\zeta_k\}$ is an admissible sequence. Then 
by \eqref{e;ad}, we must have $|g_\ell(\zeta_k)|\to 0$
as $k\to \infty$, a contradiction to \eqref{e;cont}.  
Therefore Claim 1 is justified.
\bigskip

Next we pose
\medskip

\noindent\bf Claim 2. \it There exists a constant $\delta_0 >0$ such that 
$|P_1(\zeta)|e^{-\lambda_1\Re\zeta} >\delta_0$ for every 
$\zeta \in \partial D(\underline P,\underline\lambda)\cap 
\{\zeta\in\CC : \Re\zeta \geq A\}$. \rm
\medskip

 Assume the contrary that there exists a sequence 
$\{\zeta_k\}\in \partial D(\underline P,\underline\lambda)
\cap \{\zeta\in\CC : \Re\zeta \geq A\}$ such that 
$|P_1(\zeta_k)|e^{-\lambda_1\Re\zeta_k}\rightarrow 0$ 
as $k\rightarrow\infty$. Since $P_j$ has no zeros on 
$ \{\zeta\in\CC : \Re\zeta \geq A\}$, we see that 
$\Re\zeta_k\rightarrow\infty$ as $k\rightarrow \infty$. Therefore, 
by \eqref{e;order}, we have 
$$|P_j(\zeta_k)|e^{-\lambda_j\Re\zeta_k}\rightarrow 0$$ as 
$k\rightarrow\infty$, for every $j=1,...,n$.  On the other hand,
whenever $\zeta_k\in \partial D(\underline P, \underline\lambda)$, 
the definition of the region $D(\underline P, \underline\lambda)$ implies 
that 
$$
|P_j(\zeta_k)|e^{-\lambda_j\Re\zeta_k} =1
$$ 
for some $j$.  This contradiction proves Claim 2. 
\bigskip 

We now finish the proof of Proposition \ref{p;uniqueness}.
It follows by \eqref{e;cutoff} that 
$$
|P_1(\zeta)| e^{-\lambda_1\Re\zeta} \geq \epsilon e^{-\lambda_1 A}
$$
for every  $\zeta \in D(\underline P,\underline\lambda) 
\cap \{\zeta\in \CC : \Re\zeta =A\}$.  Now, let 
$$
M := \sup_{\zeta \in D(P,\lambda)} |f(\zeta)|, \quad
\hbox{ and } \quad
C := \max\{\delta_0^{-1}, \epsilon^{-1} e^{\lambda_1 A}\}.
$$
Then
$\displaystyle{ 
|g_\ell(\zeta)| = |f(\zeta)|\frac{e^{\lambda_1 \ell\Re\zeta}}{|P_1(\zeta)|^\ell}\leq MC^\ell
}
$
for every 
$\zeta \in \partial D(\underline P, \underline\lambda;A)$. 
Since each $g_\ell$ is a bounded holomorphic function on 
$D(\underline P,\underline\lambda;A)$, the maximum modulus
principle implies that 
$$ 
|g_\ell(\zeta)| = |f(\zeta)|
\frac{e^{\lambda_1 \ell\Re\zeta}}{|P_1(\zeta)|^\ell}
\leq MC^\ell
$$
for every $\zeta \in D(\underline P,\underline\lambda;A)$. 
Now let $\Omega$ be a connected open set satisfying:
\begin{itemize} 
\item[(a)]
$\Omega \subset \{\zeta \in D(\underline P,\underline\lambda;A) 
\colon 
|P_1(\zeta)| e^{-\lambda_1\Re\zeta} < \frac{1}{2C}\}$, and
\item[(b)]
$\Omega \supset L_B := \{\zeta \in \CC \colon \hbox{\rm Im } 
\zeta=0, \Re \zeta >  B\}$, for some $B>A$.
\end{itemize}
Then, for every $\zeta\in \Omega$, we obtain that
$$
|f(\zeta)| \leq \frac{M}{2^\ell}
$$ 
for every positive integer $\ell$.  This implies that 
$f(\zeta)= 0$ for every $\zeta \in \Omega$.  Thus the unique 
continuation principle for holomorphic functions in one complex
variable implies that $f$ vanishes identically on the
unbounded component $D^\star(\underline P,\underline\lambda)$ 
as desired.  The proof of Proposition \ref{p;uniqueness} is 
now complete.
\end{proof}


\section{Proof of Theorem \ref{main}}

We now complete the proof of Theorem \ref{main}, the main result
of this paper. The notation in this section are the same as 
those used in its statement presented in Section 2.
 
\subsection{Holomorphic continuation}
In the next subsection we shall establish that $F$ is holomorphic on 
an open neighborhhood, say $V$, of the origin. Thus, assuming that as an 
established fact, we shall argue that $F$ is indeed holomorphic on the whole 
ball $B^n$.

Since the whole ball $B^n$ is contained in the {\it saturation set}, i.e.,
the maximal open set foliated by the flow curves of $X$ from $V$, 
the complex analyticity of $F$ on the whole ball follows by the 
generalized Hartogs lemma (i.e., Osgood's theorem)
proved in Lemma 6.1 in pp.\ 663--664 of \cite{KPS}, based upon the study
of analytic differential equations \cite{IY}, Ch.\ 1 and 2, e.g.  
\medskip

Thus, toward the proof of Theorem \ref{main}, it only remains to 
establish the complex analyticity of $F$ on some $V$, an open 
neighborhood of the origin, which we shall do in the next subsection.

\subsection{Complex-analyticity of $F$ in an open neighborhood 
of the origin}
Take sufficiently small a neighborhood $\widetilde V$ of the origin on 
which the function $F$ is $\mathcal{C}^2$ smooth. Resizing $\widetilde V$, 
we may assume without loss of generality that $\widetilde V$ is the 
polydisc  $\{z=(z_1,...,z_n)\in \CC^n : |z_j| <1, \;\;j=1,...,n\}$.  Our 
present goal is to establish the complex analyticity of $F$ on an open neighborhood of the origin in $\widetilde V$. 
\smallskip
 
We shall now use the normalization of $X$, for instance following 
\cite{Arnold}, p.\ 187.  If $X$ is the {\it aligned} vector field 
given in the hypothesis of Theorem \ref{main} then, without loss 
of generality, we may assume that 
the vector field $X$ now takes the form
$$
X = \sum_{j=1}^n \big(\lambda_j z_j + h_j (z_1, \ldots, z_{j-1})\big) 
\frac\partial{\partial z_j},
$$ 
with all coefficients $\lambda_j$ of the linear terms in the 
expression of $X$ positive. Furthermore we have, in addition, the 
following:

\begin{itemize}
\item[(1)] $0< \lambda_1 \leq \cdots \leq \lambda_n$ and 
\item[(2)] $h_j$ is a holomorphic polynomial that satisfies
$$
h_j (e^{-\lambda_1 \zeta} z_1, \ldots, e^{-\lambda_{j-1} \zeta} z_{j-1}) =
\e^{-\lambda_j \zeta} h_j ( z_1, \ldots, z_{j-1})
$$  
for every $j=1,\ldots, n$, any $\zeta \in \CC$.
\end{itemize}
Thus, the complex flow-curve of $X$ passing through 
$\eta := (\eta_1, \ldots, \eta_n)$ can be represented by
$$
\zeta \to z(\eta;\zeta) = (z_1 (\eta;\zeta), \ldots, z_n (\eta;\zeta))
$$
where
$$
z_j (\eta;\zeta) = e^{-\lambda_j \zeta} \big(\eta_j + g_j (\eta,\zeta)\big)
$$
with $g_j$ a holomorphic polynomial in $\eta$ and $\zeta$ satisfying
\begin{itemize}
\item $g_1 = 0$,
\item $g_j (\eta; 0) = 0$, and
\item $g_j(\cdot;\zeta)$ depends only upon $\eta_1, \ldots, \eta_{j-1}$
(but not upon $\eta_j, \ldots, \eta_n$),
\end{itemize}
for every $j=1,\ldots, n$.
\smallskip

Note that $\eta = z(\eta;0)$ for every $\eta \in \widetilde V$. Since $X$ is 
a contracting vector field, there exists a neighborhood $V$, with 
$V \subset \widetilde V$, of the origin such that, for every $\eta \in V$, 
the real integral curve $z(\eta; t)$ is defined for every real $t \geq 0$ 
in such a way that its image is contained in $\widetilde V$.
\smallskip

Let $G (\eta, \zeta) := F(z(\eta; \zeta))$. Then the hypothesis on $F$ yields
that $G(\eta, \zeta)$ is a holomorphic function in the variable $\zeta$  
on the region $D_\eta:=D(\underline{P_\eta}, \underline\lambda)$, for 
any $\eta\in V$. Here, of course, 
$$
\underline{P_\eta} (\cdot) 
= (\eta_1 + g_1 (\eta; \cdot),\ldots,\eta_n + g_n (\eta; \cdot)),
$$
and $\underline\lambda = (\lambda_1,\ldots,\lambda_n)$. 
\smallskip

Fix  an arbitrary $\eta^o \in V \setminus \{0\}$. Since we are assuming that
$F$ is $\mathcal{C}^2$ smooth on $\widetilde V$, we have 
$$
\frac{\partial}{\partial\bar\zeta} 
\Big(\frac{\partial G}{\partial\bar\eta_j}\Big) 
= \frac{\partial}{\partial\bar\eta_j} 
\Big(\frac{\partial G}{\partial\bar\zeta}\Big) =0
$$ 
for every $j=1,\ldots,n$. 
This implies that $\partial G/ \partial \bar\eta_j$ is also a bounded 
holomorphic function in $\zeta$ defined on $D_{\eta^o}$. 

The chain rule yields that
$$
\frac{\partial G}{\partial \bar\eta_n} = \frac{\partial F}{\partial\bar z_n} 
e^{-\lambda_n \bar\zeta}.
$$
Since the formal power series of $F$ contains no $\bar z$ terms, the 
function $\partial F/\partial\bar z_n$ is a $\mathcal C^\infty (0)$-function 
with the trivial formal power series representation. Therefore, the function 
$\partial G/\partial \bar\eta_n$ satisfies \eqref{e;ad} on the region 
$D_{\eta^o}$. Consequently, Proposition \ref{p;uniqueness} in Section 4
applies here; it follows therefore that ${\partial G}/{\partial\bar\eta_n}$, 
as well as ${\partial F}/{\partial\bar z_n}$, vanishes identically along the flow 
$\{z(\eta^o,\zeta) \colon \zeta \in D^\star_{\eta^o}\}$, where 
$D^\star_{\eta^o}$ is the unbounded component of $D_{\eta^o}$ 
containing $\mathbb{R}^+ := \{t\in \mathbb R \colon t\geq 0\}$. 

 Moreover for $z_{n-1}$, the chain rule implies that
\begin{eqnarray*}
\frac{\partial G}{\partial\bar\eta_{n-1}} 
&=& \frac{\partial F}{\partial \bar z_{n-1}}\, e^{-\lambda_{n-1} \bar \zeta} 
+ \frac{\partial F}{\partial \bar z_n}\cdot 
\overline{\Big(\frac{\partial g_n}{\partial\eta_{n-1}}\Big)}\cdot 
e^{-\lambda_n\bar\zeta}\\
&=&\frac{\partial F}{\partial \bar z_{n-1}}\, e^{-\lambda_{n-1} \bar \zeta}
\end{eqnarray*}
at every point of the flow curve 
$\{z(\eta^o,\zeta) \colon \zeta \in D^\star_{\eta^o}\}$. 
Proposition \ref{p;uniqueness} applies here again to yield that
$\partial F/\partial \bar z_{n-1} = 0$ on the same flow curve.  
Repeating this process, we arrive at that $\overline\partial F \equiv 0$ 
on the flow curve
$\{z(\eta^o,\zeta) \colon \zeta \in D^\star_{\eta^o}\}$.  
Let $\zeta=0$, in particular, to obtain that 
$\overline\partial F (\eta^o) = 0$. 

Since $\eta^o$ is an arbitrarily chosen point of 
$V\setminus \{0\}$, it
follows that $\overline\partial F = 0$ at every point of 
$V \setminus \{0\}$. Hence $F$ is holomorphic on $V$ also, as 
$F \in \mathcal C^2 (V)$.

Altogether, the proof of Theorem \ref{main} is now complete.
\hfill $\Box$

\begin{rem} \rm 
The proof-arguments just given may appear as if they never used the 
assumption that the holomorphic vector field $X$ must be aligned.  
On the contrary, the assumption was used throughout.  
Notice that Proposition \ref{p;uniqueness} in Section 4, 
which has played a crucial role, is valid only for the 
aligned fields.  Furthermore, Theorem \ref{main} would not hold 
if the vector field $X$ were not assumed to be aligned; see the 
discussion with counterexamples  in Section 2.2 presented 
immediately after Definition 2.1.
\end{rem}

\begin{rem} \rm
The arguments presented just now also prove the main theorem of 
Kim-Poletsky-Schmalz  \cite{KPS}, i.e., Theorem \ref{thm_KPS} in
Section 1 of this paper, but with our condition 
$F \in \mathcal C^\infty(0)$. (N.B. Their original theorem
uses only the existence of formal Taylor series at $0$. But
the method we present in this article needs $F$ to be $\mathcal C^2$ 
in an open neighborhood of the origin $0$.)   
On the other hand the regions $D(\underline P,\underline\lambda)$ 
for the case of \cite{KPS} are simpler; they are just half-planes, 
as their $P_j$'s are constants. 
\end{rem}

\begin{bibdiv}
\begin{biblist}
\bib{Arnold}{article}{
   author={Arnol'd, V. I.},
   title={Geometric methods in the theory of ordinary 
   differential equations},
   journal={Springer-Verlag},
   date={1983},
}
\bib{Chirka}{article}{
   author={Chirka, E. M.},
   title={Variations of Hartogs' theorem},
   journal={Proc. Steklov Inst. Math.},
   volume={253},
   date={2006},
   pages={212--224},
}
\bib{For}{article}{
   author={Forelli, F.},
   title={Pluriharmonicity in terms of harmonic slices},
   journal={Math. Scand.},
   volume={41},
   date={1977},
   number={2},
   pages={358--364},
   issn={0025-5521},
   review={\MR{0477146 (57 \#16689)}},
}
\bib{GKK}{article}{
   author={Greene, R. E.},
   author={Kim, K.-T.},
   author={Krantz, S. G.},
   title={The geometry of complex domains},
   journal={Bikh\"auser-Verlag},
   date={2011},
}
\bib{IY}{article}{
   author={Ilyashenko, Y.},
   author={Yakovenko, S.},
   title={Lectures on analytic differential equations},
   journal={Amer.\ Math.\ Soc.},
   date={2007},
}
\bib{JKS}{article}{
   author={Joo, J.-C.},
   author={Kim, K.-T.},
   author={Schmalz, G.},
   title={A generalization of Forelli's theorem},
   journal={Math. Ann.},
   volume={355},
   date={2013},
   number={3},
   pages={1171--1176},
}
\bib{KPS}{article}{
   author={Kim, K.-T.},
   author={Poletsky, E. A.},
   author={Schmalz, G.},
   title={Functions holomorphic along holomorphic vector fields},
   journal={J. Geom. Anal.},
   volume={19},
   date={2009},
   number={3},
   pages={655--666},
}
\bib{Sternberg}{article}{
   author={Sternberg, S.},
   title={Local contractions and a theorem of Poincar\'e},
   journal={Amer. J. Math.},
   volume={79},
   date={1957},
   pages={809--824},
 }
\bib{Stoll}{article}{
   author={Stoll, W.},
   title={The characterization of the strictly parabolic manifolds},
   journal={Ann. Scuola Norm. Sup. Pisa},
   volume={7},
   date={1980},
   pages={87--154},
 }
\bib{Ueda}{article}{
   author={Ueda, T.},
   title={Normal forms of attracting holomorphic maps},
   journal={Math. J. Toyama Univ.},
   volume={22},
   date={1999},
   pages={25--34},
 }
\end{biblist}
\end{bibdiv}

\end{document}